\DeclareMathOperator{\Const}{Const}
\DeclareMathOperator{\Int}{Int}
\DeclareMathOperator{\Der}{Der}
\newtheorem{theorem}{Theorem}
\newtheorem{definition}[theorem]{Definition}
\newtheorem{lemma}[theorem]{Lemma}
\newtheorem{proposition}[theorem]{Proposition}
\newtheorem{remark}[theorem]{Remark}
\newtheorem{example}[theorem]{Example}
\newtheorem{corollary}[theorem]{Corollary}
\title{Integration in semirings}
\author{Ivan~Chajda and Helmut~L\"anger}
\date{}
\begin{document}

\footnotetext{Support of the research by the Austrian Science Fund (FWF), project I~4579-N, and the Czech Science Foundation (GA\v CR), project 20-09869L, entitled ``The many facets of orthomodularity'', as well as by \"OAD, project CZ~02/2019, entitled ``Function algebras and ordered structures related to logic and data fusion'', and, concerning the first author, by IGA, project P\v rF~2021~030, is gratefully acknowledged.}

\maketitle

\begin{abstract}
The concept of integral as an inverse to that of derivation was already introduced for rings and recently also for lattices. Since semirings generalize both rings and bounded distributive lattices, it is natural to investigate integration in semirings. This is our aim in the present paper. We show properties of such integrals from the point of view of semiring operations. Examples of semirings with derivation where integrals are introduced are presented in the paper. These illuminate rather specific properties of such integrals. We show when the set of all integrals on a given semiring forms a semiring again.
\end{abstract}

{\bf AMS Subject Classification:} 16Y60, 12K10

{\bf Keywords:} Semiring, derivation, $d$-integral, integrable element, $d$-constant

The concept of derivation in rings is known for many decades, see e.g.\ \cite{Ba} and references therein. This concept was generalized for lattices by G.~Sz\'asz (\cite S), see also e.g.\ \cite F. For semilattices it was generalized by J.~S.~Golan and H.~Simmons (\cite{GS}), see also the monograph \cite G and the papers \cite{CT}, \cite D and \cite{SG}.

The concept of integration as an inverse to that of derivation was introduced for rings by I.~Bani\v c (\cite{Ba}). Recently it was extended also for lattices in \cite{YAZ}. Because both rings and bounded distributive lattices are special cases of semirings as defined in the monograph \cite G, it is natural to extend this concept also to semirings what we will do here.

Although derivations on semirings were already introduced and treated in \cite{CT} and \cite D and applied for regular events in \cite{Br}, for the reader's convenience we repeat all necessary definitions.

Recall that a {\em {\rm(}unitary{\rm)} semiring} is an algebra $\mathbf S=(S,+,\cdot,0,1)$ of type $(2,2,0,0)$ such that
\begin{enumerate}[(i)]
\item $(S,+)$ is a commutative semigroup,
\item $(S,\cdot)$ is a semigroup,
\item $x\cdot(y+z)\approx x\cdot y+x\cdot z$ and $(y+z)\cdot x\approx y\cdot x+z\cdot x$,
\item $x\cdot0\approx0\cdot x\approx0$ and $x\cdot1\approx1\cdot x\approx x$.
\end{enumerate}
If $\mathbf S$ satisfies the identity $x\cdot y\approx y\cdot x$ then it is called {\em commutative}. We call an element $a$ of $S$ {\em additively invertible} or {\em invertible} if it possesses an {\em additive} or {\em multiplicative inverse}, respectively. Because of the associativity of $+$ and $\cdot$, such an inverse is unique. We will denote it by $-a$ or $a^{-1}$, respectively. Sometimes we consider elements that have only a one-sided inverse. A mapping $d:S\rightarrow S$ is called a {\em derivation} on $\mathbf S$ if
\begin{enumerate}[(i)]
\item $d(x+y)\approx d(x)+d(y)$,
\item $d(x\cdot y)\approx d(x)\cdot y+x\cdot d(y)$.
\end{enumerate}

The formula describing the derivation of a product of two elements can be easily extended to a formula describing the derivation of a finite product of elements.

\begin{lemma}\label{lem5}
Let $\mathbf S=(S,+,\cdot,0,1)$ be a semiring, $d\in\Der\mathbf S$, $n$ a positive integer and $a_1,\ldots,a_n,a\in S$. Then
\[
d(a_1\cdot\cdots\cdot a_n)=d(a_1)\cdot a_2\cdot\cdots\cdot a_n+a_1\cdot d(a_2)\cdot a_3\cdot\cdots\cdot a_n+\cdots+a_1\cdot\cdots a_{n-1}\cdot d(a_n)
\]
and hence $d(a^n)=n\cdot a^{n-1}\cdot d(a)$ if $\mathbf S$ is commutative.
\end{lemma}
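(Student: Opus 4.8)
The plan is to establish the product formula by induction on $n$ and then to read off the power formula as an immediate specialization. For the base of the induction, the case $n=1$ is the trivial identity $d(a_1)=d(a_1)$, while the case $n=2$ is exactly axiom (ii) in the definition of a derivation; these start the induction.

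For the inductive step, suppose the formula holds for some $n\geq1$ and consider a product of $n+1$ factors. Using associativity of $\cdot$ I would regroup $a_1\cdot\cdots\cdot a_{n+1}=(a_1\cdot\cdots\cdot a_n)\cdot a_{n+1}$ and apply axiom (ii) to obtain
\[
d(a_1\cdot\cdots\cdot a_{n+1})=d(a_1\cdot\cdots\cdot a_n)\cdot a_{n+1}+(a_1\cdot\cdots\cdot a_n)\cdot d(a_{n+1}).
\]
Next I would substitute the induction hypothesis for $d(a_1\cdot\cdots\cdot a_n)$ and use right distributivity (axiom (iii)) to multiply the resulting $n$-term sum by $a_{n+1}$ on the right. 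After reassociating each product, the first $n$ summands acquire the trailing factor $a_{n+1}$, and together with the remaining term $a_1\cdot\cdots\cdot a_n\cdot d(a_{n+1})$ they yield precisely the desired $(n+1)$-term sum, completing the induction.

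For the power formula I would set $a_1=\cdots=a_n=a$ in the product formula. The $i$-th summand then equals $a^{i-1}\cdot d(a)\cdot a^{n-i}$, and commutativity of $\mathbf S$ lets me move all copies of $a$ together, rewriting each summand as $a^{n-1}\cdot d(a)$. Since there are exactly $n$ such equal summands, their sum is $n\cdot a^{n-1}\cdot d(a)$, where $n\cdot x$ abbreviates the $n$-fold sum $x+\cdots+x$.

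I expect no genuine obstacle here; the only point requiring care is the bookkeeping in the noncommutative setting, namely making sure that the trailing factor $a_{n+1}$ is distributed on the correct (right) side and that multiplication is reassociated consistently before the terms are collected.
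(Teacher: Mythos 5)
Your proof is correct and follows essentially the same route as the paper: induction on $n$ with the inductive step regrouping $a_1\cdot\cdots\cdot a_{n+1}=(a_1\cdot\cdots\cdot a_n)\cdot a_{n+1}$, applying the derivation axiom, and distributing $a_{n+1}$ on the right, with the power formula obtained by setting $a_1=\cdots=a_n:=a$. Your treatment of the commutative specialization is in fact slightly more explicit than the paper's, which simply remarks that the last formula is the special case.
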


\begin{proof}
We use induction on $n$. For $n=1$ the assertion is trivial. Now assume $n$ to be a positive integer and the formula to hold for $n$ factors. If $a_1,\ldots,a_{n+1}\in S$ then
\begin{align*}
d(a_1\cdot\cdots\cdot a_{n+1}) & =d\big((a_1\cdot\cdots\cdot a_n)\cdot a_{n+1}\big)=d(a_1\cdot\cdots\cdot a_n)\cdot a_{n+1}+a_1\cdot\cdots\cdot a_n\cdot d(a_{n+1})= \\
& =\big(d(a_1)\cdot a_2\cdot\cdots\cdot a_n+\cdots+a_1\cdot\cdots\cdot a_{n-1}\cdot d(a_n)\big)\cdot a_{n+1}+ \\
& \hspace*{5mm}+a_1\cdot\cdots\cdot a_n\cdot d(a_{n+1})= \\
& =d(a_1)\cdot a_2\cdot\cdots\cdot a_{n+1}+\cdots+a_1\cdot\cdots\cdot a_n\cdot d(a_{n+1}),
\end{align*}
i.e.\ the formula holds for $n+1$ factors. The last formula can be considered as the special case $a_1=\cdots=a_n:=a$.
\end{proof}

Let $\Der\mathbf S$ denote the set of all derivations on $\mathbf S$.

\begin{definition}
Let $\mathbf S=(S,+,\cdot,0,1)$ be a semiring, $d\in\Der\mathbf S$ and $a,b\in S$. Then $a$ is called a {\em $d$-integral} of $b$ if $d(a)=b$. Let $I_d(b)$ denote the set of all $d$-integrals of $b$. The element $b$ is called {\em $d$-integrable} if $I_d(b)\neq\emptyset$. The $d$-integrals of $0$ we call $d$-constants. Let {\em $\Const_d(S)$} denote the set of all $d$-constants of $S$. The element $b$ of $S$ is called {\em $d$-integrable} if $I_d(b)\neq\emptyset$. Let $\Int_d(S)$ denote the set of all $d$-integrable elements of $S$.
\end{definition}

Hence $a\in S$ is a $d$-constant if and only if $d(a)=0$. Moreover, it is easy to see that if $a$ is additively invertible then so is $d(a)$ and $d(-a)=-d(a)$.

A nice non-trivial example of a semiring with derivation $d$ where the $d$-integral has interesting properties is in the following one.

\begin{example}\label{ex1}
Let $\mathbf R=(R,+,\cdot,0,1)$ be a semiring and
\[
\mathbf S:=(S,+,\cdot,\left(
\begin{array}{cc}
0 & 0 \\
0 & 0
\end{array}
\right),\left(
\begin{array}{cc}
1 & 0 \\
0 & 1
\end{array}
\right))
\]
with
\[
S:=\{\left(
\begin{array}{cc}
x & y \\
0 & x
\end{array}
\right)\mid x,y\in R\}
\]
the semiring of $2\times2$-matrices of this form. It is easy to check that $S$ is closed with respect to addition and multiplication. Define
\[
d(\left(
\begin{array}{cc}
x & y \\
0 & x
\end{array}
\right)):=\left(
\begin{array}{cc}
0 & y \\
0 & 0
\end{array}
\right)\text{ for all }x,y\in R.
\]
Then {\rm(}see e.g.\ {\rm\cite G)} $d\in\Der\mathbf S$. We have
\begin{align*}
\Int_d(S) & =\{\left(
\begin{array}{cc}
0 & y \\
0 & 0
\end{array}
\right)\mid y\in R\}, \\
I_d(\left(
\begin{array}{cc}
0 & y \\
0 & 0
\end{array}
\right)) & =\{\left(
\begin{array}{cc}
x & y \\
0 & x
\end{array}
\right)\mid x\in R\}\text{ for all }y\in R, \\
\Const_d(S) & =\{\left(
\begin{array}{cc}
x & 0 \\
0 & x
\end{array}
\right)\mid x\in R\}, \\
\Int_d(S)\cap\Const_d(S) & =\{\left(
\begin{array}{cc}
0 & 0 \\
0 & 0
\end{array}
\right)\}.
\end{align*}
\end{example}

Another example which is close to the classical definition of derivation and integral in calculus, but has slightly different properties is the following one.

\begin{example}\label{ex2}
Let $\mathbf F[x]=(F[x],+,\cdot,0,1)$ be the semiring of polynomials in one variable over a field $\mathbf F=(F,+,\cdot,0,1)$. Define
\[
d(\sum_{i=0}^na_ix^i):=\sum_{i=1}^nia_ix^{i-1}
\]
for all $\sum\limits_{i=0}^na_ix^i\in F[x]$. Then $d\in\Der\mathbf F[x]$ and we have
\begin{align*}
                  \Int_d(F[x]) & =\left\{
\begin{array}{ll}
F[x]                                                               & \text{if }{\rm char}\mathbf F=0, \\
\{\sum\limits_{i=0}^na_ix^i\mid a_i=0\text{ if }i\equiv-1\bmod p\} & \text{if }{\rm char}\mathbf F=p,
\end{array}
\right. \\
I_d(\sum\limits_{i=0}^na_ix^i) & =\{\sum\limits_{i=0}^n\frac{a_i}{i+1}x^{i+1}+b\mid b\in F\}\text{ for all }\sum\limits_{i=0}^na_ix^i\in\Int_d(F[x]), \\
                \Const_d(F[x]) & =\left\{
\begin{array}{ll}
F                                                                     & \text{if }{\rm char}\mathbf F=0, \\
\{\sum\limits_{i=0}^na_ix^i\mid a_i=0\text{ if }i\not\equiv0\bmod p\} & \text{if }{\rm char}\mathbf F=p,
\end{array}
\right. \\
\Const_d(F[x]) & \subseteq\Int_d(F[x]).
\end{align*}
\end{example}

Let us recall some elementary, but useful properties of derivations on semirings.

Let $\mathbf S=(S,+,\cdot,0,1)$ be a semiring and $d_1,d_2\in\Der\mathbf S$. Then
\begin{align*}
     (d_1+d_2)(x+y) & =d_1(x+y)+d_2(x+y)=\big(d_1(x)+d_1(y)\big)+\big(d_2(x)+d_2(y)\big)= \\
                    & =\big(d_1(x)+d_2(x)\big)+\big(d_1(y)+d_2(y)\big)=(d_1+d_2)(x)+(d_1+d_2)(y), \\
(d_1+d_2)(x\cdot y) & =d_1(x\cdot y)+d_2(x\cdot y)= \\
                    & =\big(d_1(x)\cdot y+x\cdot d_1(y)\big)+\big(d_2(x)\cdot y+x\cdot d_2(y)\big)= \\
                    & =\big(d_1(x)\cdot y+d_2(x)\cdot y\big)+\big(x\cdot d_1(y)+x\cdot d_2(y)\big)= \\
							      & =\big(d_1(x)+d_2(x)\big)\cdot y+x\cdot\big(d_1(y)+d_2(y)\big)= \\
										& =(d_1+d_2)(x)\cdot y+x\cdot(d_1+d_2)(y).
\end{align*}
Hence $d_1+d_2\in\Der\mathbf S$. Trivially the mapping $d_0:S\rightarrow S$ defined by $d_0(x):=0$ for all $x\in S$ is a derivation on $\mathbf S$. Thus $(\Der\mathbf S,+,d_0)$ forms a monoid.

We start with some elementary but basic properties of $d$-integrals and sets of $d$-integrals on a given semiring.

\begin{proposition}\label{prop1}
Let $\mathbf S=(S,+,\cdot,0,1)$ be a semiring, $d\in\Der\mathbf S$ and $a,b\in S$. Then
\begin{enumerate}[{\rm(i)}]
\item $I_d(a)+I_d(b)\subseteq I_d(a+b)$,
\item if $e\in I_d(a)$ and $f\in I_d(b)$ then $e\cdot f\in I_d(a\cdot f+e\cdot b)$,
\item $0\in\Const_d(S)$,
\item if $1\in I_d(a)$ then $a+a=a$,
\item $d(a)\in\Int_d(S)$, $a\in I_d\big(d(a)\big)$, and if $a\in\Int_d(S)$ then $d\big(I_d(a)\big)=\{a\}$,
\item if $c,1\in\Const_d(S)$ and $c$ in invertible then $c^{-1}\in\Const_d(S)$.
\end{enumerate}
\end{proposition}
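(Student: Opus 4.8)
The plan is to verify each of the six claims by a direct computation from the two defining properties of a derivation together with the semiring axioms, after unwinding the definitions $I_d(b)=\{s\in S:d(s)=b\}$ and $\Const_d(S)=I_d(0)$. I would dispatch (i) and (ii) immediately from the additive and multiplicative rules: if $e\in I_d(a)$ and $f\in I_d(b)$, so that $d(e)=a$ and $d(f)=b$, then $d(e+f)=d(e)+d(f)=a+b$ settles (i), and $d(e\cdot f)=d(e)\cdot f+e\cdot d(f)=a\cdot f+e\cdot b$ settles (ii).

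For (iii) and (iv) the main point is to exploit the idempotence of $0$ and $1$ under multiplication rather than additivity, since additive cancellation is unavailable in a general semiring. From $0=0\cdot0$ the product rule gives $d(0)=d(0)\cdot0+0\cdot d(0)=0$ by the axiom $x\cdot0=0\cdot x=0$, whence $0\in\Const_d(S)$. Likewise, for (iv), $1\in I_d(a)$ means $d(1)=a$, and applying the product rule to $1=1\cdot1$ gives $a=d(1)=d(1)\cdot1+1\cdot d(1)=a+a$.

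Claim (v) I expect to be essentially a restatement of the definitions: the trivial identity $d(a)=d(a)$ shows $a\in I_d\big(d(a)\big)$, so $d(a)$ is $d$-integrable. If moreover $a\in\Int_d(S)$, then $I_d(a)\neq\emptyset$, every $c\in I_d(a)$ satisfies $d(c)=a$ so that $d\big(I_d(a)\big)\subseteq\{a\}$, and nonemptiness yields the reverse inclusion, giving $d\big(I_d(a)\big)=\{a\}$.

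The one item that I expect to require genuine care is (vi), because the semiring is not assumed commutative and one must track the sides on which multiplication occurs. Here I would apply the product rule to $c\cdot c^{-1}=1$: using $d(c)=0$ and $d(1)=0$ yields $0=d(1)=d(c)\cdot c^{-1}+c\cdot d(c^{-1})=c\cdot d(c^{-1})$. Left-multiplying by $c^{-1}$ and using $c^{-1}\cdot c=1$ then forces $d(c^{-1})=0$, so $c^{-1}\in\Const_d(S)$. The only delicate choice is picking the correct side on which to cancel the inverse; beyond that there is no real obstacle, as each step is dictated once the product rule is invoked.
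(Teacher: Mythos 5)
Your proposal is correct and follows essentially the same route as the paper: items (i)--(v) are verbatim the paper's computations, and for (vi) you use exactly the same ingredients (the product rule applied to $c\cdot c^{-1}=1$, the hypotheses $d(c)=d(1)=0$, and left multiplication by $c^{-1}$ via $c^{-1}\cdot c=1$), merely running the chain of equalities in the opposite direction from the paper's single forward computation starting at $d(c^{-1})$. In particular you correctly noticed that $d(1)=0$ must be \emph{assumed} (it is part of the hypothesis $1\in\Const_d(S)$), since without additive cancellation a semiring derivation need not kill $1$.
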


\begin{proof}
If $e\in I_d(a)$ and $f\in I_d(b)$ then $d(e)=a$ and $d(f)=b$ and hence
\begin{enumerate}[(i)]
\item $d(e+f)=d(e)+d(f)=a+b$, i.e.\ $e+f\in I_d(a+b)$,
\item $d(e\cdot f)=d(e)\cdot f+e\cdot d(f)=a\cdot f+e\cdot b$, i.e.\ $e\cdot f\in I_d(a\cdot f+e\cdot b)$.
\end{enumerate}
Moreover, we have
\begin{enumerate}
\item[(iii)] $d(0)=d(0\cdot0)=d(0)\cdot0+0\cdot d(0)=0$.
\item[(iv)] $a+a=d(1)\cdot1+1\cdot d(1)=d(1\cdot1)=d(1)=a$.
\item[(v)] This follows from the definition of $I_d\big(d(a)\big)$ and from the $d$-integrability of $a$.
\item[(vi)] If $c,1\in\Const_d(S)$ and $c$ in invertible then 
\begin{align*}
d(c^{-1}) & =1\cdot d(c^{-1})=(c^{-1}\cdot c)\cdot d(c^{-1})=c^{-1}\cdot\big(c\cdot d(c^{-1})\big)=c^{-1}\cdot\big(0+c\cdot d(c^{-1})\big)= \\
          & =c^{-1}\cdot\big(0\cdot c^{-1}+c\cdot d(c^{-1})\big)=c^{-1}\cdot\big(d(c)\cdot c^{-1}+c\cdot d(c^{-1})\big)= \\
          & =c^{-1}\cdot d(c\cdot c^{-1})=c^{-1}\cdot d(1)=c^{-1}\cdot0=0,
\end{align*}
i.e.\ $c^{-1}\in\Const_d(S)$.
\end{enumerate}
\end{proof}

It is easy to check that $I_{d_0}(0)=S$ and $I_{d_0}(x)=\emptyset$ for all $x\in S\setminus\{0\}$. Hence, $\Const_{d_0}(S)=S$.

\begin{remark}
Let $\mathbf S=(S,+,\cdot,0,1)$ be a semiring and $d\in\Der\mathbf S$. It is evident that $d$ is surjective if and only if $I_d(x)\neq\emptyset$ for all $x\in S$, i.e.\ if and only if every element of $S$ is $d$-integrable. On the other hand, $d$ is injective if and only if $|I_d(x)|\leq1$ for all $x\in S$ which means that every $d$-integrable element of $S$ has exactly one $d$-integral.
\end{remark}

In the sequel we state several natural results concerning the behavior of constants in $d$-integrals. The following lemma says that the product of a $d$-integrable element of $S$ and a $d$-constant of $S$ is again $d$-integrable.

\begin{lemma}\label{lem2}
Let $\mathbf S=(S,+,\cdot,0,1)$ be a semiring, $d\in\Der\mathbf S$, $a\in S$ and $c\in\Const_d(S)$. Then
\begin{enumerate}[{\rm(i)}]
\item $I_d(a)+c\subseteq I_d(a)$,
\item $c\cdot I_d(a)\subseteq I_d(c\cdot a)$ and $I_d(a)\cdot c\subseteq I_d(a\cdot c)$.
\end{enumerate}
\end{lemma}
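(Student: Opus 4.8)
The plan is to verify each of the stated inclusions element-wise: I fix an arbitrary member of the left-hand set and check, by a direct computation with $d$, that it is a $d$-integral of the element named on the right. Throughout I use only that $c\in\Const_d(S)$ means $d(c)=0$, together with the defining properties (i) and (ii) of a derivation, and (for the vanishing of products with $0$) the semiring axiom $0\cdot x\approx 0$.

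For (i), I would take an arbitrary $e\in I_d(a)$, so that $d(e)=a$. Applying additivity of $d$ and the identity $d(c)=0$ gives $d(e+c)=d(e)+d(c)=a+0=a$, whence $e+c\in I_d(a)$. As $e$ ranges over $I_d(a)$, this yields $I_d(a)+c\subseteq I_d(a)$.

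For (ii), I would again fix $e\in I_d(a)$, so $d(e)=a$. For the left product, the Leibniz rule gives $d(c\cdot e)=d(c)\cdot e+c\cdot d(e)$; since $d(c)=0$ and $0\cdot e=0$, the first summand vanishes and we are left with $c\cdot a$, so that $c\cdot e\in I_d(c\cdot a)$. The right product is handled symmetrically via $d(e\cdot c)=d(e)\cdot c+e\cdot d(c)=a\cdot c+0=a\cdot c$, giving $e\cdot c\in I_d(a\cdot c)$.

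These verifications are entirely routine, so I do not expect a genuine obstacle. The one point deserving attention is that $\mathbf S$ is not assumed commutative: the two one-sided products in (ii) must therefore be treated as separate statements, and in each Leibniz expansion it is essential that the surviving factor sits on the correct side while the term carrying the factor $d(c)$ is precisely the one that is annihilated.
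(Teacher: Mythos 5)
Your proof is correct and follows essentially the same route as the paper's: fix an arbitrary $d$-integral $e$ of $a$, apply additivity of $d$ for (i) and the Leibniz rule for (ii), and use $d(c)=0$ together with $0\cdot x\approx x\cdot 0\approx 0$ to kill the extra term. Your closing remark about treating the two one-sided products separately in the noncommutative setting matches the paper's handling, which likewise verifies both expansions explicitly.
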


\begin{proof}
If $b\in I_d(a)$ then
\begin{enumerate}[(i)]
\item $d(b+c)=d(b)+d(c)=a+0=a$,
\item $d(c\cdot b)=d(c)\cdot b+c\cdot d(b)=0\cdot b+c\cdot a=0+c\cdot a=c\cdot a$, \\
$d(b\cdot c)=d(b)\cdot c+b\cdot d(c)=a\cdot c+b\cdot0=a\cdot c+0=a\cdot c$.
\end{enumerate}
\end{proof}

Item (ii) of Lemma~\ref{lem2} implies that $c\cdot\Int_d(S)\subseteq\Int_d(S)$. A similar result does not hold for addition instead of multiplication. If we consider the semiring $(S,+,\cdot,0,1)$ with derivation $d$ of Example~\ref{ex1} with non-trivial $R$ then we have $c+a\notin\Int_d(S)$ for all $c\in\Const_d(S)\setminus\{0\}$ and $a\in\Int_d(S)$.

More interesting results can be obtained if we consider invertible elements of $S$.

\begin{lemma}\label{lem3}
Let $\mathbf S=(S,+,\cdot,0,1)$ be a semiring, $d\in\Der\mathbf S$ and $a,b,e\in S$ and assume $e$ to be invertible. Then
\begin{align*}
a\in I_d\big(e\cdot d(b)\big) & \Leftrightarrow b\in I_d\big(e^{-1}\cdot d(a)\big), \\
a\in I_d\big(d(b)\cdot e\big) & \Leftrightarrow b\in I_d\big(d(a)\cdot e^{-1}\big),
\end{align*}
\end{lemma}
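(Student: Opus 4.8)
The plan is to unfold the definition of $I_d$ and reduce each equivalence to a one-line manipulation exploiting the invertibility of $e$. Recall that $a\in I_d(c)$ means precisely $d(a)=c$. Hence the first equivalence asserts nothing more than
\[
d(a)=e\cdot d(b)\quad\Longleftrightarrow\quad d(b)=e^{-1}\cdot d(a),
\]
and the second asserts
\[
d(a)=d(b)\cdot e\quad\Longleftrightarrow\quad d(b)=d(a)\cdot e^{-1}.
\]
So once the definitions are unwound there is no integration left to do; the entire content is the cancellation of an invertible factor.

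For the forward direction of the first equivalence I would start from $d(a)=e\cdot d(b)$ and multiply on the left by $e^{-1}$, obtaining $e^{-1}\cdot d(a)=e^{-1}\cdot(e\cdot d(b))=(e^{-1}\cdot e)\cdot d(b)=1\cdot d(b)=d(b)$, where I use associativity of $\cdot$, the defining property $e^{-1}\cdot e=1$ of the multiplicative inverse, and the identity axiom $1\cdot x=x$. For the converse I would instead start from $d(b)=e^{-1}\cdot d(a)$ and multiply on the left by $e$, using $e\cdot e^{-1}=1$ in the same way to recover $d(a)=e\cdot d(b)$.

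The second equivalence is handled symmetrically, the only change being that every multiplication by $e$ or $e^{-1}$ is performed on the right rather than on the left. This is precisely why the two equivalences must be stated separately: since $\mathbf S$ need not be commutative, left- and right-cancellation are genuinely different, and neither can be deduced from the other. Consequently the only point demanding any care is the bookkeeping, namely ensuring that a left multiplication is cancelled by a left multiplication and a right by a right; there is no substantive obstacle beyond that.
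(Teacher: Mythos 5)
Your proof is correct and follows essentially the same route as the paper: unfold the definition $a\in I_d(c)\Leftrightarrow d(a)=c$ and cancel the invertible factor $e$. The paper merely leaves the cancellation step implicit, whereas you spell out the left/right multiplications by $e^{-1}$ and $e$; this is just a more detailed rendering of the identical argument.
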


\begin{proof}
$a\in I_d\big(e\cdot d(b)\big)$ is equivalent to $d(a)=e\cdot d(b)$, and $b\in I_d\big(e^{-1}\cdot d(a)\big)$ is equivalent to $d(b)=e^{-1}\cdot d(a)$. The second equivalence follows analogously.
\end{proof}

\begin{corollary}
Let $\mathbf S=(S,+,\cdot,0,1)$ be a semiring, $d\in\Der\mathbf S$ and $c\in\Const_d(S)$ and assume $c$ to be inverible. Then $c^{-1}\in I_d\big(c^{-1}\cdot d(1)\big)\cap I_d\big(d(1)\cdot c^{-1}\big)$.
\end{corollary}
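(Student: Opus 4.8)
My plan is to reduce both membership claims to a single computation of $d(c^{-1})$, exploiting that $c$ is a $d$-constant, so $d(c)=0$, together with the two-sided inverse relations $c\cdot c^{-1}=1$ and $c^{-1}\cdot c=1$ (available because $c$ is assumed invertible, i.e.\ two-sidedly). The governing idea is simply to differentiate the defining identities of the inverse and discard the term carrying $d(c)$.

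First I would differentiate $c\cdot c^{-1}=1$. Using derivation axiom (ii), $d(c)=0$, and $0\cdot c^{-1}=0$,
\[
d(1)=d(c\cdot c^{-1})=d(c)\cdot c^{-1}+c\cdot d(c^{-1})=c\cdot d(c^{-1}).
\]
Multiplying this on the left by $c^{-1}$ and using associativity together with $c^{-1}\cdot c=1$ gives $c^{-1}\cdot d(1)=(c^{-1}\cdot c)\cdot d(c^{-1})=d(c^{-1})$. Hence $d(c^{-1})=c^{-1}\cdot d(1)$, which is exactly the assertion $c^{-1}\in I_d\big(c^{-1}\cdot d(1)\big)$.

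For the second membership I would run the symmetric argument on the other inverse relation $c^{-1}\cdot c=1$: differentiating yields $d(1)=d(c^{-1})\cdot c$, and multiplying on the right by $c^{-1}$ gives $d(1)\cdot c^{-1}=d(c^{-1})\cdot(c\cdot c^{-1})=d(c^{-1})$, i.e.\ $c^{-1}\in I_d\big(d(1)\cdot c^{-1}\big)$. Intersecting the two conclusions proves the corollary. (As an alternative, each half follows in one line from Lemma~\ref{lem3} with the substitution $e=c^{-1}$, $a=c^{-1}$, $b=1$: the first equivalence rewrites $c^{-1}\in I_d\big(c^{-1}\cdot d(1)\big)$ as $1\in I_d\big(c\cdot d(c^{-1})\big)$, which is precisely the differentiated identity $d(1)=c\cdot d(c^{-1})$, and symmetrically for the second equivalence.) I expect no genuine obstacle; the only subtlety is that, since the semiring multiplication need not be commutative, one must keep the left- and right-inverse identities separate — using $c\cdot c^{-1}=1$ for the first half and $c^{-1}\cdot c=1$ for the second.
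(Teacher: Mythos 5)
Your proposal is correct, and it takes a somewhat different (more self-contained) route than the paper. The paper proves the first membership by chaining two earlier results: from $c^{-1}\in I_d\big(d(c^{-1})\big)$ and $c\in\Const_d(S)$ it obtains $1=c\cdot c^{-1}\in I_d\big(c\cdot d(c^{-1})\big)$ by Lemma~\ref{lem2}(ii), and then converts this to $c^{-1}\in I_d\big(c^{-1}\cdot d(1)\big)$ by Lemma~\ref{lem3} with $e=c$; the second membership is declared analogous. You instead inline the whole computation: differentiating $c\cdot c^{-1}=1$ with $d(c)=0$ gives $d(1)=c\cdot d(c^{-1})$, and left multiplication by $c^{-1}$ yields $d(c^{-1})=c^{-1}\cdot d(1)$. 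This is exactly the content of the paper's two-lemma chain, as your own parenthetical correctly identifies (your Lemma~\ref{lem3} instance with $e=c^{-1}$, so $e^{-1}=c$, is the paper's instance read in the other direction). Your direct version buys a little extra: it exhibits the explicit identity $c^{-1}\cdot d(1)=d(c^{-1})=d(1)\cdot c^{-1}$, so the two sets being intersected visibly share the common element $c^{-1}$ for a common reason, whereas the paper's version buys brevity by reusing established machinery. Your care with noncommutativity is exactly right (differentiate $c\cdot c^{-1}=1$, cancel with $c^{-1}\cdot c=1$, and symmetrically for the second half), and the one implicit ingredient you use, $0+x=x$, is part of the paper's standard semiring conventions and is used in just the same way in the paper's own proof of Lemma~\ref{lem2}.
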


\begin{proof}
Since $c^{-1}\in I_d\big(d(c^{-1})\big)$ we have $1=c\cdot c^{-1}\in I_d\big(c\cdot d(c^{-1})\big)$ according to (ii) of Lemma~\ref{lem2} whence $c^{-1}\in I_d\big(c^{-1}\cdot d(1)\big)$ because of Lemma~\ref{lem3}. The assertion $c^{-1}\in I_d\big(d(1)\cdot c^{-1}\big)$ follows analogously.
\end{proof}

Converse inclusions to those of Lemma~\ref{lem2} can be proved if we assume that the constant $c$ in question is additively invertible or invertible, respectively.

\begin{proposition}\label{prop2}
Let $\mathbf S=(S,+,\cdot,0,1)$ be a semiring, $d\in\Der\mathbf S$ and $a,b\in S$. Then {\rm(i)} -- {\rm(iii)} hold:
\begin{enumerate}[{\rm(i)}]
\item If $b$ is additively invertible and $-b\in\Const_d(S)$ then $I_d(a)\subseteq I_d(a)+b$,
\item if $b$ is invertible and $b^{-1}\in\Const_d(S)$ then
\begin{align*}
I_d(b\cdot a) & \subseteq b\cdot I_d(a), \\
I_d(a\cdot b) & \subseteq I_d(a)\cdot b.
\end{align*}
\end{enumerate}
\end{proposition}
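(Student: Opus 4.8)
The plan is to prove each inclusion by an ``invert and reinsert'' argument: starting from an element of the left-hand set, I would strip off the copy of $b$ (additively in (i), multiplicatively in (ii)), check via Lemma~\ref{lem2} that what remains lies in $I_d(a)$, and then put $b$ back to exhibit the original element as a member of the right-hand set. In this way each part reduces immediately to the corresponding part of Lemma~\ref{lem2} together with the cancellation of $b$ against its inverse.

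For (i), I would take an arbitrary $g\in I_d(a)$. Since $-b\in\Const_d(S)$, Lemma~\ref{lem2}(i) gives $g+(-b)\in I_d(a)$. Additive invertibility of $b$ then yields $\big(g+(-b)\big)+b=g$, so $g\in I_d(a)+b$, which is exactly the assertion.

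For (ii), I would take $g\in I_d(b\cdot a)$ and form the witness $b^{-1}\cdot g$. Since $b^{-1}\in\Const_d(S)$, Lemma~\ref{lem2}(ii) places $b^{-1}\cdot g$ in $I_d\big(b^{-1}\cdot(b\cdot a)\big)$, which equals $I_d(a)$ because $b^{-1}\cdot b=1$. Multiplying back on the left gives $b\cdot(b^{-1}\cdot g)=g$, whence $g\in b\cdot I_d(a)$. The inclusion $I_d(a\cdot b)\subseteq I_d(a)\cdot b$ I would treat symmetrically, using the right-multiplication half of Lemma~\ref{lem2}(ii) together with $b\cdot b^{-1}=1$.

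Since both parts reduce directly to Lemma~\ref{lem2} plus this cancellation, I do not expect a genuine obstacle. The only point needing attention is tracking which one-sided inverse is used where: the left inclusion in (ii) relies on $b^{-1}\cdot b=1$, while the right inclusion relies on $b\cdot b^{-1}=1$. Because $b$ is assumed (two-sided) invertible, both identities are available and the argument goes through without complication; in a purely one-sided setting one would instead have to verify that the appropriate one-sided inverse is at hand.
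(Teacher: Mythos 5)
Your proof is correct and takes essentially the same route as the paper's: the paper performs exactly your ``strip $b$, apply Lemma~\ref{lem2}, reinsert $b$'' argument, only phrased at the level of sets (e.g.\ $I_d(a)=\big(I_d(a)+(-b)\big)+b\subseteq I_d(a)+b$) rather than elementwise. One small quibble with your closing remark: in (ii) each of the two inclusions actually uses \emph{both} identities $b^{-1}\cdot b=1$ and $b\cdot b^{-1}=1$ (one to reduce $I_d\big(b^{-1}\cdot(b\cdot a)\big)$ to $I_d(a)$, the other to recover $g=b\cdot(b^{-1}\cdot g)$), so neither inclusion would survive with only a one-sided inverse; this is harmless here since $b$ is assumed two-sided invertible.
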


\begin{proof}
\
\begin{enumerate}[(i)]
\item If $b$ is additively invertible and $-b\in\Const_d(S)$ then according to (i) of Lemma~\ref{lem2} we have
\[
I_d(a)=I_d(a)+0=I_d(a)+\big((-b)+b\big)=\big(I_d(a)+(-b)\big)+b\subseteq I_d(a)+b.
\]
\item If $b$ is invertible and $b^{-1}\in\Const_d(S)$ then according to (ii) of Lemma~\ref{lem2} we have
\begin{align*}
I_d(b\cdot a) & =1\cdot I_d(b\cdot a)=(b\cdot b^{-1})\cdot I_d(b\cdot a)=b\cdot\big(b^{-1}\cdot I_d(b\cdot a)\big)\subseteq \\
              & \subseteq b\cdot I_d\big(b^{-1}\cdot(b\cdot a)\big)=b\cdot I_d\big((b^{-1}\cdot b)\cdot a\big)=b\cdot I_d(1\cdot a)=b\cdot I_d(a), \\
I_d(a\cdot b) & =I_d(a\cdot b)\cdot1=I_d(a\cdot b)\cdot(b^{-1}\cdot b)=\big(I_d(a\cdot b)\cdot b^{-1}\big)\cdot b\subseteq \\
              & \subseteq I_d\big((a\cdot b)\cdot b^{-1}\big)\cdot b=I_d\big(a\cdot(b\cdot b^{-1})\big)\cdot b=I_d(a\cdot1)\cdot b=I_d(a)\cdot b.							
\end{align*}
\end{enumerate}
\end{proof}

If we assume that $c$ to be (additively) invertible then we obtain the following result:

\begin{corollary}
Let $\mathbf S=(S,+,\cdot,0,1)$ be a semiring, $d\in\Der\mathbf S$ and $a\in S$ and $c\in\Const_d(S)$. Then
\begin{enumerate}[{\rm(i)}]
\item If $c$ is additively invertible and $-c\in\Const_d(S)$ then $I_d(a)+c=I_d(a)$,
\item if $c$ is invertible and $c^{-1}\in\Const_d(S)$ then
\begin{align*}
I_d(c\cdot a) & =c\cdot I_d(a), \\
I_d(a\cdot c) & =I_d(a)\cdot c.
\end{align*}
\end{enumerate}
\end{corollary}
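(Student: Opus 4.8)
The plan is to read each claimed equality as a pair of opposite inclusions, one of which is already supplied by Lemma~\ref{lem2} and the other by Proposition~\ref{prop2}. The point of the corollary is exactly that the extra hypotheses placed on $c$ (additive invertibility with $-c\in\Const_d(S)$ in part~(i), multiplicative invertibility with $c^{-1}\in\Const_d(S)$ in part~(ii)) are precisely what is needed to run both the forward inclusion from Lemma~\ref{lem2} and the reverse inclusion from Proposition~\ref{prop2} simultaneously. So the whole proof is a matter of quoting the two earlier results with the right substitution and noting that they fit together.

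First I would treat~(i). Since $c\in\Const_d(S)$, item~(i) of Lemma~\ref{lem2} immediately gives $I_d(a)+c\subseteq I_d(a)$. For the reverse inclusion I would invoke item~(i) of Proposition~\ref{prop2} with the element $b$ there specialised to $c$: as $c$ is additively invertible and $-c\in\Const_d(S)$, that proposition yields $I_d(a)\subseteq I_d(a)+c$. Combining the two inclusions gives $I_d(a)+c=I_d(a)$, as required.

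Next I would treat~(ii) in the same two-step fashion. From $c\in\Const_d(S)$, item~(ii) of Lemma~\ref{lem2} provides the inclusions $c\cdot I_d(a)\subseteq I_d(c\cdot a)$ and $I_d(a)\cdot c\subseteq I_d(a\cdot c)$. For the converse inclusions I would apply item~(ii) of Proposition~\ref{prop2} with $b:=c$: since $c$ is invertible and $c^{-1}\in\Const_d(S)$, the proposition delivers $I_d(c\cdot a)\subseteq c\cdot I_d(a)$ and $I_d(a\cdot c)\subseteq I_d(a)\cdot c$. Pairing each of these with the matching inclusion from Lemma~\ref{lem2} produces the equalities $I_d(c\cdot a)=c\cdot I_d(a)$ and $I_d(a\cdot c)=I_d(a)\cdot c$.

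I do not expect any genuine obstacle here: the statement is a packaging corollary whose entire content is the matching of hypotheses, and the only care needed is to check that the elements named $b$ in Proposition~\ref{prop2} are allowed to be the constant $c$ under the stated assumptions, which they are. It is worth remarking in passing that the condition $-c\in\Const_d(S)$ in~(i) is in fact automatic: from $d(c)=0$ together with the earlier observation that $d(-c)=-d(c)$ for additively invertible $c$, one gets $d(-c)=-0=0$, so the hypothesis is redundant; I would mention this only as a side note rather than rely on it, since the corollary is stated with that hypothesis explicit.
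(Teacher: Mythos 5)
Your proof is correct and matches the paper exactly: the paper's own proof is the one-line remark that the corollary ``follows from Lemma~\ref{lem2} and Proposition~\ref{prop2}'', which is precisely the pairing of inclusions you spell out (with $b:=c$ in Proposition~\ref{prop2}). Your side observation that $-c\in\Const_d(S)$ is automatic is also sound, since $d(-c)=-d(c)=-0=0$ by the paper's earlier remark on additively invertible elements.
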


\begin{proof}
This follows from Lemma~\ref{lem2} and Proposition~\ref{prop2}.
\end{proof}

If a constant $c$ and has a one-side inverse then we can derive the following $d$-integration rules.

\begin{proposition}
Let $\mathbf S=(S,+,\cdot,0,1)$ be a semiring, $d\in\Der\mathbf S$, $a,b,e\in S$ and $c\in\Const_d(S)$. Then
\begin{enumerate}[{\rm(i)}]
\item If $c\cdot a=1$ then $c\cdot I_d(a\cdot b)\subseteq I_d(b)$,
\item if $c\cdot a=1$ and $a\cdot e\in I_d(b)$ then $e\in I_d(c\cdot b)$,
\item if $a\cdot c=1$ then $I_d(b\cdot a)\cdot c\subseteq I_d(b)$,
\item if $a\cdot c=1$ and $e\cdot a\in I_d(b)$ then $e\in I_d(b\cdot c)$.
\end{enumerate}
\end{proposition}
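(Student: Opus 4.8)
The plan is to reduce all four statements to the defining Leibniz rule $d(x\cdot y)=d(x)\cdot y+x\cdot d(y)$ together with the single fact $d(c)=0$, which holds because $c\in\Const_d(S)$. Parts (i) and (iii) are set inclusions, so I would take a typical element of the left-hand set and verify that it lies in the right-hand set. For (i) such an element is $c\cdot f$ with $f\in I_d(a\cdot b)$, i.e.\ $d(f)=a\cdot b$; then
\[
d(c\cdot f)=d(c)\cdot f+c\cdot d(f)=0+c\cdot(a\cdot b)=(c\cdot a)\cdot b=1\cdot b=b,
\]
so $c\cdot f\in I_d(b)$. Part (iii) is the mirror image: for $f\cdot c$ with $d(f)=b\cdot a$ one computes $d(f\cdot c)=d(f)\cdot c+f\cdot d(c)=(b\cdot a)\cdot c=b\cdot(a\cdot c)=b$, using $d(c)=0$ again.

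For parts (ii) and (iv) the element $e$ is not itself presented as a product, so the key trick is to insert the one-sided inverse and use associativity to turn $e$ into a product on which the Leibniz rule can act. For (ii), from $c\cdot a=1$ I would write $e=1\cdot e=(c\cdot a)\cdot e=c\cdot(a\cdot e)$ and then, using $d(c)=0$ and the hypothesis $d(a\cdot e)=b$,
\[
d(e)=d\big(c\cdot(a\cdot e)\big)=d(c)\cdot(a\cdot e)+c\cdot d(a\cdot e)=c\cdot b,
\]
giving $e\in I_d(c\cdot b)$. Part (iv) is symmetric: from $a\cdot c=1$ write $e=e\cdot(a\cdot c)=(e\cdot a)\cdot c$ and apply the product rule to get $d(e)=d(e\cdot a)\cdot c+(e\cdot a)\cdot d(c)=b\cdot c$.

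Since each computation is a single application of the Leibniz rule, there is no genuine obstacle; the only point requiring care is respecting the non-commutativity of $\cdot$. The hypotheses $c\cdot a=1$ and $a\cdot c=1$ are one-sided, so in (i) and (ii) the constant $c$ must be multiplied on the left and in (iii) and (iv) on the right, and the associative regroupings $(c\cdot a)\cdot b=c\cdot(a\cdot b)$, $(e\cdot a)\cdot c=e\cdot(a\cdot c)$, etc.\ must be performed in the correct order so that the factor $c\cdot a$ (respectively $a\cdot c$) actually collapses to $1$. Keeping the sides consistent is essentially the whole content of the argument.
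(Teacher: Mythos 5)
Your proof is correct and takes essentially the same route as the paper: the paper deduces all four parts from Lemma~\ref{lem2}(ii) (whose proof is exactly your one-line Leibniz computation with $d(c)=0$) together with the same insert-the-one-sided-inverse factorizations $e=(c\cdot a)\cdot e=c\cdot(a\cdot e)$ and $e=(e\cdot a)\cdot c$, whereas you simply inline that lemma. Everything checks, including the careful handling of the one-sided hypotheses $c\cdot a=1$ versus $a\cdot c=1$.
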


\begin{proof}
\
\begin{enumerate}[(i)]
\item If $c\cdot a=1$ then
\[
c\cdot I_d(a\cdot b)\subseteq I_d\big(c\cdot(a\cdot b)\big)=I_d\big((c\cdot a)\cdot b\big)=I_d(1\cdot b)=I_d(b)
\]
according to (i) of Lemma~\ref{lem2}.
\item If $c\cdot a=1$ and $a\cdot e\in I_d(b)$ then $e=1\cdot e=(c\cdot a)\cdot e=c\cdot(a\cdot e)\in I_d(c\cdot b)$ according to (i) of Lemma~\ref{lem2}.
\item follows analogously to (i).
\item follows analogously to (ii).
\end{enumerate}
\end{proof}

\begin{lemma}
Let $\mathbf S=(S,+,\cdot,0,1)$ be a semiring, $d\in\Der\mathbf S$ and $a\in S$ and assume $d\big(d(a)\big)=d(a)$ and $d(a)$ to be additively invertible. Then $a\in\Const_d(S)+\Int_d(S)$.
\end{lemma}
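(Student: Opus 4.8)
The plan is to exhibit an explicit decomposition $a=c+u$ with $c\in\Const_d(S)$ and $u\in\Int_d(S)$. The natural candidate is to subtract off the part of $a$ that $d$ reproduces as its own derivative: since $d(a)$ is additively invertible by hypothesis, I can form $c:=a+\big(-d(a)\big)$ and set $u:=d(a)$.

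First I would check that $u=d(a)$ lies in $\Int_d(S)$. This is immediate from item~(v) of Proposition~\ref{prop1}, which records that $d(a)\in\Int_d(S)$ (indeed $a$ itself is a $d$-integral of $d(a)$), so nothing further is needed for this factor.

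Next I would verify that $c=a+\big(-d(a)\big)$ is a $d$-constant, i.e.\ that $d(c)=0$. Additivity of $d$ gives $d(c)=d(a)+d\big(-d(a)\big)$. Because $d(a)$ is additively invertible, the observation recorded just after the definition of $d$-integral yields $d\big(-d(a)\big)=-d\big(d(a)\big)$, so $d(c)=d(a)+\big(-d(d(a))\big)$. The hypothesis $d\big(d(a)\big)=d(a)$ now collapses this to $d(c)=d(a)+\big(-d(a)\big)=0$, whence $c\in\Const_d(S)$.

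Finally I would confirm the decomposition itself: using commutativity and associativity of $+$ together with the defining property of the additive inverse of $d(a)$, one computes $c+u=\big(a+(-d(a))\big)+d(a)=a+\big((-d(a))+d(a)\big)=a+0=a$, so $a\in\Const_d(S)+\Int_d(S)$. The only step requiring care — and the single place the two hypotheses are actually used — is the constancy of $c$: both the existence of $-d(a)$ and the identity $d\big(-d(a)\big)=-d\big(d(a)\big)$ rely on $d(a)$ being additively invertible, while the cancellation to $0$ is precisely where $d\big(d(a)\big)=d(a)$ enters. I expect no genuine obstacle beyond bookkeeping the additive inverses correctly in the semiring setting, where no general subtraction is available and one must keep each use of $-d(a)$ tied to the invertibility assumption.
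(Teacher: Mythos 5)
Your proof is correct and is essentially identical to the paper's: the authors use the same decomposition $a=\big(a+(-d(a))\big)+d(a)$, verify $d\big(a+(-d(a))\big)=d(a)+\big(-d(d(a))\big)=d(a)+\big(-d(a)\big)=0$ via the same remark that $d(-u)=-d(u)$ for additively invertible $u$, and check the sum recombines to $a$ exactly as you do. Your explicit appeal to Proposition~\ref{prop1}(v) for $d(a)\in\Int_d(S)$ is a minor presentational difference only.
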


\begin{proof}
If $e:=a+\big(-d(a)\big)$ and $b:=d(a)$ then $b$ is $d$-integrable,
\begin{align*}
d(e) & =d\Big(a+\big(-d(a)\big)\Big)=d(a)+d\big(-d(a)\big)=d(a)+\Big(-d\big(d(a)\big)\Big)= \\
     & =d(a)+\big(-d(a)\big)=0, \\
 e+b & =\Big(a+\big(-d(a)\big)\Big)+d(a)=a+\Big(\big(-d(a)\big)+d(a)\Big)=a+0=a.
\end{align*}
\end{proof}

Observe that $d\big(d(c)\big)=d(0)=0=d(c)$ for all $c\in\Const_d(S)$.

In the next lemma we prove a result similar to ``integration per parts'' known in calculus.

\begin{lemma}
Let $\mathbf S=(S,+,\cdot,0,1)$ be a semiring, $d\in\Der\mathbf S$ and $a,b\in S$ and assume $e\in I_d(a)$, $f\in I_d\big(e\cdot d(b)\big)$ and $f$ to be additively invertible. Then $e\cdot b+(-f)\in I_d(a\cdot b)$.
\end{lemma}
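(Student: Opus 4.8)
The plan is a direct computation: I would verify that $d\big(e\cdot b+(-f)\big)=a\cdot b$, which is exactly the assertion $e\cdot b+(-f)\in I_d(a\cdot b)$. This is the semiring analogue of the calculus rule $\int u\,dv=uv-\int v\,du$, with $e$ playing the role of $u$ and $b$ that of $v$. First I would use additivity of the derivation to split $d\big(e\cdot b+(-f)\big)=d(e\cdot b)+d(-f)$, then apply the product rule to the first summand to get $d(e\cdot b)=d(e)\cdot b+e\cdot d(b)$. Substituting the hypothesis $e\in I_d(a)$, i.e.\ $d(e)=a$, yields $d(e\cdot b)=a\cdot b+e\cdot d(b)$.

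Next I would treat the term $d(-f)$. Since $f$ is additively invertible, the remark following the definition of $d$-integrals guarantees $d(-f)=-d(f)$. Using $f\in I_d\big(e\cdot d(b)\big)$, i.e.\ $d(f)=e\cdot d(b)$, this gives $d(-f)=-\big(e\cdot d(b)\big)$. Combining the two pieces produces $d\big(e\cdot b+(-f)\big)=a\cdot b+e\cdot d(b)+\big(-(e\cdot d(b))\big)$.

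Finally I would cancel. Here the additive invertibility of $f$ does double duty: by the same remark the element $d(f)=e\cdot d(b)$ is itself additively invertible, so $e\cdot d(b)$ and its additive inverse annihilate, leaving $a\cdot b+0=a\cdot b$. The only point that needs care is the rearrangement of the three summands so that $e\cdot d(b)$ sits beside its inverse; this is legitimate because $(S,+)$ is a commutative semigroup.

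I do not anticipate a genuine obstacle, since the argument is a short linearity-plus-cancellation computation. The single subtlety worth flagging is that, in a semiring, additive inverses need not exist in general, so one must use the hypothesis on $f$ at two distinct points—to make the expression $-f$ meaningful and to justify the final cancellation of $e\cdot d(b)$—rather than treating subtraction as freely available.
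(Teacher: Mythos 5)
Your proposal is correct and follows essentially the same route as the paper's own proof: additivity of $d$, the product rule, substitution of $d(e)=a$ and $d(f)=e\cdot d(b)$ via the remark that $d(-f)=-d(f)$ for additively invertible $f$, and a final cancellation of $e\cdot d(b)$ against its additive inverse. Your explicit flagging of the two distinct uses of the invertibility of $f$ is a point the paper leaves implicit, but the argument is the same computation.
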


\begin{proof}
We have
\begin{align*}
d\big(e\cdot b+(-f)\big) & =d(e\cdot b)+d(-f)=\big(d(e)\cdot b+e\cdot d(b)\big)+\big(-d(f)\big)= \\
                         & =a\cdot b+\Big(e\cdot d(b)+\big(-e\cdot d(b)\big)\Big)=a\cdot b+0=a\cdot b.
\end{align*}
\end{proof}

Finally, we can show when the addition of two sets of $d$-integrals is again a set of $d$-integrals.

\begin{theorem}
Let $\mathbf S=(S,+,\cdot,0,1)$ be a semiring, $d\in\Der\mathbf S$ and $a,b\in S$ and assume there exists some $e\in I_d(a)$ which is additively invertible.Then $I_d(a+b)=I_d(a)+I_d(b)$.
\end{theorem}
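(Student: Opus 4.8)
The inclusion $I_d(a)+I_d(b)\subseteq I_d(a+b)$ is already available from Proposition~\ref{prop1}(i) and needs no hypothesis whatsoever on $a$, $b$ or $d$. Hence the entire content of the theorem lies in the reverse inclusion $I_d(a+b)\subseteq I_d(a)+I_d(b)$, and this is precisely where the additively invertible integral $e\in I_d(a)$ must be exploited. My plan is to prove this missing inclusion by a direct ``translate by $-e$'' decomposition.

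First I would extract the consequences of the hypothesis. From $d(e)=a$ together with the additive invertibility of $e$, the observation recorded just after the definition of $d$-constants gives that $a=d(e)$ is itself additively invertible, with $-a=-d(e)=d(-e)$. This is the sole point at which the hypothesis enters, and it is exactly what permits me to form $g+(-e)$ inside $S$ and to rearrange using $-a$.

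Next I would take an arbitrary $g\in I_d(a+b)$, so that $d(g)=a+b$, and propose the decomposition $g=e+\big(g+(-e)\big)$, whose first summand $e$ lies in $I_d(a)$ by assumption. It then remains to verify that $v:=g+(-e)$ lies in $I_d(b)$. Computing $d(v)=d(g)+d(-e)=(a+b)+(-a)$ and using commutativity and associativity of $+$ to rewrite $(a+b)+(-a)=\big(a+(-a)\big)+b=b$ shows $d(v)=b$, i.e.\ $v\in I_d(b)$. Since $e+v=e+\big(g+(-e)\big)=\big(e+(-e)\big)+g=g$, I obtain $g\in I_d(a)+I_d(b)$, which establishes the inclusion and hence the claimed equality.

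I do not expect a serious obstacle: the argument is a one-line translation of the same flavour as the proof of Proposition~\ref{prop2}(i). The only subtlety deserving care is that in a semiring $+$ need not be cancellative and elements need not possess additive inverses, so the rearrangement $(a+b)+(-a)=b$ must be justified purely by commutativity and associativity together with the existence of $-a$ (guaranteed via $e$), and not by any appeal to cancellation. Once the decomposition $g=e+\big(g+(-e)\big)$ is written down, the remaining verification is entirely routine.
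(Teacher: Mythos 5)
Your proof is correct and takes essentially the same route as the paper: both decompose an arbitrary element $f\in I_d(a+b)$ as $e+\big((-e)+f\big)$, using $d(-e)=-d(e)=-a$ to check that the translated element lies in $I_d(b)$, with the easy inclusion supplied by Proposition~\ref{prop1}(i). The only differences are notational (your $v$ is the paper's $g$), plus your explicit — and welcome — remark that the rearrangement relies only on commutativity, associativity and the existence of $-a$, never on additive cancellation.
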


\begin{proof}
According to (i) of Proposition~\ref{prop1}, $I_d(a)+I_d(b)\subseteq I_d(a+b)$. Now we prove the converse inclusion. If $f\in I_d(a+b)$ and $g:=(-e)+f$ then
\begin{align*}
d(g) & =d\big((-e)+f\big)=d(-e)+d(f)=-d(e)+d(f)=(-a)+(a+b)= \\
     & =\big((-a)+a\big)+b=0+b=b
\end{align*}
and hence $g\in I_d(b)$ and
\[
e+g=e+\big((-e)+f\big)=\big(e+(-e)\big)+f=0+f=f.
\]
\end{proof}

We are going to show that the set of all $d$-integrals on a given semiring $\mathbf S=(S,+,\cdot,0,1)$ (together with the empty set in the case when $\Int_d(S)=S$) can be equipped with two binary operations such that the resulting structure forms a semiring again. A condition for this construction is that the set of all $d$-integrable elements of $S$ is closed under multiplication.

\begin{theorem}\label{th1}
Let $\mathbf S=(S,+,\cdot,0,1)$ be a semiring, $d\in\Der\mathbf S$ and
\[
I_d(S):=\{\emptyset\}\cup\{I_d(x)\mid x\in S\},
\]
assume
\begin{equation}\label{equ1}
\Int_d(S)\cdot\Int_d(S)\subseteq\Int_d(S)
\end{equation}
and define binary operations $+$ and $\cdot$ on $I_d(S)$ as follows:
\[
\begin{array}{lll}
I_d(x)\oplus I_d(y)                & :=I_d(x+y)      & \text{if }I_d(x),I_d(y)\neq\emptyset, \\
z\oplus\emptyset=\emptyset\oplus z & :=z,            & \\
I_d(x)\odot I_d(y)                 & :=I_d(x\cdot y) & \text{if }I_d(x),I_d(y)\neq\emptyset, \\
z\odot\emptyset=\emptyset\odot z   & :=\emptyset     &
\end{array}
\]
{\rm(}$x,y\in S$; $z\in I_d(S)${\rm)}. Then $(I_d(S),\oplus,\odot,\emptyset)$ is a {\rm(}not necessarily unitary{\rm)} semiring.
\end{theorem}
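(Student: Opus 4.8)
The plan is to verify the semiring axioms for $(I_d(S),\oplus,\odot,\emptyset)$ directly, but the decisive preliminary step is to check that $\oplus$ and $\odot$ are well defined. The clauses involving $\emptyset$ are explicit, so the only question is whether the clauses $I_d(x)\oplus I_d(y):=I_d(x+y)$ and $I_d(x)\odot I_d(y):=I_d(x\cdot y)$ depend on the chosen representatives $x,y$. Here I would use the observation that a \emph{nonempty} member of $I_d(S)$ determines its index uniquely: if $I_d(x)=I_d(y)\neq\emptyset$ and $a$ lies in this common set, then $d(a)=x$ and $d(a)=y$, whence $x=y$. Thus $x\mapsto I_d(x)$ is a bijection from $\Int_d(S)$ onto the nonempty members of $I_d(S)$, and the two clauses are unambiguous. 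It remains to note that the operations land in $I_d(S)$: for nonempty $I_d(x),I_d(y)$ we have $x,y\in\Int_d(S)$, so Proposition~\ref{prop1}(i) gives $I_d(x)+I_d(y)\subseteq I_d(x+y)$ with the left side nonempty, hence $I_d(x+y)\neq\emptyset$; and hypothesis \eqref{equ1} gives $x\cdot y\in\Int_d(S)$, hence $I_d(x\cdot y)\neq\emptyset$.

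With well-definedness in hand, the additive part is routine. I would show $(I_d(S),\oplus)$ is a commutative monoid with neutral element $\emptyset$. Commutativity and associativity of $\oplus$ on nonempty members transfer from $(S,+)$ through the above bijection, the only subtlety being that for associativity one must know the intermediate sums $x+y$ and $y+z$ are integrable --- which is again Proposition~\ref{prop1}(i) --- so that the ``both nonempty'' clause applies to the outer sum and both sides equal $I_d(x+y+z)$. Every case in which some argument equals $\emptyset$ collapses via $z\oplus\emptyset=\emptyset\oplus z=z$, so that $\emptyset$ is neutral and associativity reduces to the neutral-element law.

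For the multiplicative axioms I would treat associativity of $\odot$ and the two distributive laws together, splitting into the all-nonempty case and the cases where some argument is $\emptyset$. In the latter, the absorbing clause $z\odot\emptyset=\emptyset\odot z=\emptyset$ makes both sides of each identity equal to $\emptyset$ (for distributivity one also uses that $\emptyset$ is additively neutral). In the all-nonempty case, associativity of $\odot$ follows from associativity of $\cdot$ in $\mathbf S$, and distributivity from the two distributive laws of $\mathbf S$ together with Proposition~\ref{prop1}(i) to expand $I_d(y+z)$; this is precisely where hypothesis \eqref{equ1} is indispensable, since it guarantees that the partial products $x\cdot y$, $y\cdot z$, $x\cdot z$ are integrable, so that their $I_d(\cdot)$ are nonempty and the computation stays inside the ``both nonempty'' branch rather than falling into the absorbing clause. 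Finally $z\odot\emptyset=\emptyset\odot z=\emptyset$ is exactly the absorption required of the zero element, so all axioms of a (not necessarily unitary) semiring hold.

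The main obstacle is bookkeeping the role of $\emptyset$, and in particular seeing why \eqref{equ1} cannot be dropped. Without it, a product $I_d(x)\odot I_d(y)=I_d(x\cdot y)$ of two nonempty members could itself be $\emptyset$; then a triple product or a distributive expansion would route one side of the identity through the absorbing clause and the other through a genuine $I_d(\cdot)$, breaking associativity and distributivity. Hypothesis \eqref{equ1} (together with Proposition~\ref{prop1}(i) on the additive side) removes this danger once and for all, reducing the whole proof to transporting the semiring identities of $\mathbf S$ across the bijection $x\mapsto I_d(x)$ and handling the $\emptyset$-clauses by the neutral and absorbing laws.
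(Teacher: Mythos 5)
Your proof is correct and takes essentially the same route as the paper's: a direct case-by-case verification of the semiring axioms, using Proposition~\ref{prop1}(i) for additive closure and hypothesis~(\ref{equ1}) for multiplicative closure, with all cases involving $\emptyset$ handled by the neutral and absorbing clauses. The only difference is that you spell out the well-definedness argument (if $I_d(x)=I_d(y)\neq\emptyset$ then $x=y$, so $x\mapsto I_d(x)$ is injective on $\Int_d(S)$), a point the paper dismisses as ``easy to see''.
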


\begin{proof}
Let $a,b,c\in S$ and $e,f\in I_d(S)$. It is easy to see that $\oplus$ and $\odot$ are well-defined. Now (i) of Proposition~\ref{prop1} implies $\Int_d(S)+\Int_d(S)\subseteq\Int_d(S)$. If $a,b\in\Int_d(S)$ then
\[
I_d(a)\oplus I_d(b)=I_d(a+b)=I_d(b+a)=I_d(b)\oplus I_d(a).
\]
Moreover,
\begin{align*}
\emptyset\oplus e & =e=e\oplus\emptyset, \\
 e\oplus\emptyset & =e=\emptyset\oplus e.
\end{align*}
This shows that $\oplus$ is commutative. If $a,b,c\in\Int_d(S)$ then
\begin{align*}
\big(I_d(a)\oplus I_d(b)\big)\oplus I_d(c) & =I_d(a+b)\oplus I_d(c)=I_d\big((a+b)+c\big)=I_d\big(a+(b+c)\big)= \\
                                           & =I_d(a)\oplus I_d(b+c)=I_d(a)\oplus\big(I_d(b)\oplus I_d(c)\big).
\end{align*}
Moreover,
\begin{align*}
(\emptyset\oplus e)\oplus f & =e\oplus f=\emptyset\oplus(e\oplus f), \\
 (e\oplus\emptyset)\oplus f & =e\oplus f=e\oplus(\emptyset\oplus f), \\
 (e\oplus f)\oplus\emptyset & =e\oplus f=e\oplus(f\oplus\emptyset).
\end{align*}
This shows that $\oplus$ is associative. Because of
\[
e\oplus\emptyset=\emptyset\oplus e=e,
\]
$\emptyset$ is the neutral element with respect to $\oplus$. In order to be able to prove associativity of $\odot$ we need assumption (\ref{equ1}). If $a,b,c\in\Int_d(S)$ then
\begin{align*}
\big(I_d(a)\odot I_d(b)\big)\odot I_d(c) & =I_d(a\cdot b)\odot I_d(c)=I_d\big((a\cdot b)\cdot c\big)=I_d\big(a\cdot(b\cdot c)\big)= \\
                                         & =I_d(a)\odot I_d(b\cdot c)=I_d(a)\odot\big(I_d(b)\odot I_d(c)\big).
\end{align*}
We have
\begin{align*}
(\emptyset\odot e)\odot f & =\emptyset\odot f=\emptyset=\emptyset\odot(e\odot f), \\
 (e\odot\emptyset)\odot f & =\emptyset\odot f=\emptyset=e\odot\emptyset=e\odot(\emptyset\odot f), \\
 (e\odot f)\odot\emptyset & =\emptyset=e\odot\emptyset=e\odot(f\odot\emptyset).
\end{align*}
This shows that $\odot$ is associative. If $a,b,c\in\Int_d(S)$ then
\begin{align*}
\big(I_d(a)\oplus I_d(b)\big)\odot I_d(c) & =I_d(a+b)\odot I_d(c)=I_d\big((a+b)\cdot c\big)=I_d(a\cdot c+b\cdot c)= \\
                                          & =I_d(a\cdot c)\oplus I_d(b\cdot c)=\big(I_d(a)\odot I_d(c)\big)\oplus\big(I_d(b)\odot I_d(c)\big).																				
\end{align*}
Moreover,
\begin{align*}
(\emptyset\oplus e)\odot f & =e\odot f=\emptyset\oplus(e\odot f)=(\emptyset\odot f)\oplus(e\odot f), \\
 (e\oplus\emptyset)\odot f & =e\odot f=(e\odot f)\oplus\emptyset=(e\odot f)\oplus(\emptyset\odot f), \\
 (e\oplus f)\odot\emptyset & =\emptyset=\emptyset\oplus\emptyset=(e\odot\emptyset)\oplus(f\odot\emptyset).
\end{align*}
This proves one distributive law. The other one can be shown analogously. Finally,
\[
e\odot\emptyset=\emptyset\odot e=\emptyset.
\]
The proof is complete.
\end{proof}

It might be interesting to investigate how strong assumption (\ref{equ1}) of Theorem~\ref{th1} is. The following is immediate.

\begin{remark}
If $d(x)\cdot d(y)=0$ for all $x,y\in S$ then obviously {\rm(\ref{equ1})} is satisfied.
\end{remark}

Finally, let us see in which cases from our previous examples assumption (\ref{equ1}) holds.

\begin{example}
The semiring from Example~\ref{ex1} together with its derivation satisfies {\rm(\ref{equ1})} since $\left(
\begin{array}{cc}
0 & x \\
0 & 0
\end{array}
\right)\left(
\begin{array}{cc}
0 & y \\
0 & 0
\end{array}
\right)=\left(
\begin{array}{cc}
0 & 0 \\
0 & 0
\end{array}
\right)$ for all $x,y\in R$.
\end{example}

\begin{example}
The semiring from Example~\ref{ex2} together with its derivation satisfies {\rm(\ref{equ1})} if and only if ${\rm char}\mathbf F\in\{0,2\}$. This can be seen as follows:
\[
\Int_d(F[x])=\left\{
\begin{array}{ll}
F[x]                                                               & \text{if }{\rm char}\mathbf F=0, \\
\{\sum\limits_{i=0}^na_ix^i\mid a_i=0\text{ if }i\equiv1\bmod 2\}  & \text{if }{\rm char}\mathbf F=2, \\
\{\sum\limits_{i=0}^na_ix^i\mid a_i=0\text{ if }i\equiv-1\bmod p\} & \text{if }{\rm char}\mathbf F=p\geq3.
\end{array}
\right.
\]
Hence, if ${\rm char}\mathbf F=p\geq3$ then $x,x^{p-2}\in\Int_d(F[x])$, but $x^{p-1}=x\cdot x^{p-2}\notin\Int_d(F[x])$.
\end{example}

Authors' addresses:

Ivan Chajda \\
Palack\'y University Olomouc \\
Faculty of Science \\
Department of Algebra and Geometry \\
17.\ listopadu 12 \\
771 46 Olomouc \\
Czech Republic \\
ivan.chajda@upol.cz

Helmut L\"anger \\
TU Wien \\
Faculty of Mathematics and Geoinformation \\
Institute of Discrete Mathematics and Geometry \\
Wiedner Hauptstra\ss e 8-10 \\
1040 Vienna \\
Austria, and \\
Palack\'y University Olomouc \\
Faculty of Science \\
Department of Algebra and Geometry \\
17.\ listopadu 12 \\
771 46 Olomouc \\
Czech Republic \\
helmut.laenger@tuwien.ac.at
\end{document}